\newcommand{\R}{\mathbb{R}}
\newtheorem{theorem}{Theorem}[section]
\newtheorem{lemma}[theorem]{Lemma}
\newtheorem{corollary}[theorem]{Corollary}
\newtheorem{remark}[theorem]{Remark}
\newtheorem{question}[theorem]{Question}
\numberwithin{equation}{section}
\numberwithin{figure}{section}
\def\intave#1{\int_{#1}\hbox{\llap{$\raise2.3pt\hbox{\vrule
height.9pt width7pt}\phantom{\scriptstyle{#1}}\mkern-2mu$}}}
\begin{document}
\title{A remark on isolated removable singularity of harmonic maps in dimension two}
\author{Changyou Wang}
\address{Department of Mathematics, Purdue University, West Lafayette, IN 47097, USA}
\email{wang2482@purdue.edu}
\begin{abstract} For a ball $B_R(0)\subset\mathbb{R}^2$, we provide sufficient conditions such that 
a harmonic map $u\in C^\infty(B_R(0)\setminus\{0\}, N)$, with a self-similar bound on its gradient,  belongs to $C^\infty(B_R(0))$. 
Those conditions also guarantee the triviality of such harmonic maps when $R=\infty$.
\end{abstract}
\maketitle
\section{Introduction}

This short note address a question arising from the author's recent study with Bang \cite{BangWang2025}  
on the question of rigidity for the steady (simplified) Ericksen-Leslie system in
$\R^n$, which seeks to answer that if $(u,d)\in C^\infty(\R^n\setminus\{0\}, \R^n\times \mathbb{S}^{n-1})$ solves
\begin{equation}\label{EL1}
\begin{cases}
-\Delta u+u\cdot\nabla u+\nabla p=-\nabla\cdot(\nabla d\odot \nabla d),\\
\nabla\cdot u=0,\\
\Delta d+|\nabla d|^2 d=u\cdot\nabla d,
\end{cases}
\ \ {\rm{in}} \ \ \R^n\setminus\{0\},
\end{equation}
and satisfies a self-similar bound 
\begin{equation}\label{self-similar-bound}
|u(x)|\le \frac{C_1(n)}{|x|}, \ \ |\nabla d(x)|\le \frac{C_2(n)}{|x|},  \ \forall x\in \R^n\setminus\{0\},
\end{equation}
for some constants $C_1(n), C_2(n)>0$, then $(u,\nabla d)\equiv (0,0) \ {\rm{in}}\ \R^n$?

We obtained in \cite{BangWang2025} some partial results towards the question concerning
\eqref{EL1} and \eqref{self-similar-bound} mentioned above. In particular, among other results
we proved that when $n\ge 3$, there exists $\varepsilon_n>0$ such that if $C_1(n), C_2(n)\le\varepsilon_n$ then
$\nabla d\equiv 0$; while $u\equiv 0$ when $n\ge 4$, and is a Landau solution of the steady Navier-Stokes equation
when $n=3$. When $n=2$,  we constructed infinitely many nontrivial solutions of \eqref{EL1} and \eqref{self-similar-bound},
that resemble the so-called Hamel's solutions of steady Navier-Stokes equation in $\R^2$. 

A Liouville theorem on harmonic maps plays an important role in \cite{BangWang2025}, that is, for $n\ge 3$ if
$d\in C^\infty(\R^n\setminus\{0\},N)$ solves the equation of harmonic maps:
\begin{equation}\label{hm0}
\Delta d+A(d)(\nabla d,\nabla d)=0 \ \ {\rm{in}}\ \ \R^n\setminus\{0\}, 
\end{equation}
and  there exists an $\varepsilon_0(n)>0$ such that 
\begin{equation}\label{small-self}
|\nabla d(x)|\le \frac{\varepsilon_0(n)}{|x|}, \ \forall x\in\R^n\setminus\{0\},
\end{equation}
then $d$ must be a constant map. Here $N\subset \R^L$ is a compact smooth Riemann manifold without boundary,
and $A$ denotes the second fundamental form of $N$.

A natural question to ask is that whether this Liouville property remains to be true when $n=2$.  
More precisely, 
\begin{question}  Suppose $d\in C^\infty(\R^2\setminus\{0\},N)$ solves \eqref{hm0} and satisfies \eqref{small-self} for some
small constant $\varepsilon_0(2)$. Then $d$ must be constant.
\end{question}

To the knowledge of the author, this question has not been addressed from the literature.  In contrast with $n\ge 3$, \eqref{small-self} alone
does not guarantee $d$ has locally finite Dirichlet energy in dimension two, 
or, equivalently, $E(d, B_1(0))=\int_{B_1(0)}|\nabla d|^2<\infty$ for the unit ball $B_1(0)\subset\R^2$. Thus, one may apply neither 
the celebrated theorem by Sacks-Uhlenbeck \cite{SacksUhlenbeck1981} on the removability of isolated singularity of harmonic maps in dimension two,
nor the regularity theorem by H\'elein \cite{Helein1991} on weakly harmonic maps in dimension two. Observe that 
$d(x)=\frac{x}{|x|}:\R^2\setminus \{0\}\to \mathbb{S}^1$ is a harmonic map, satisfying $|\nabla d(x)|=\frac{1}{|x|}$ for $x\not=0$
and $E(d, B_1(0))=\infty$,  while $x=0$ is a non-removable singular point. This example indicates that $\varepsilon_0(2)$ in Question 1.1 must be 
sufficiently small. 

\medskip
In this note, we will give a partial answer to Question 1.1. More precisely, let $B_R(0)\subset\R^2$ be the ball in $\R^2$
with center $0$ and radius $R$, we first prove
\begin{theorem} \label{removability} 
There exists an
$\varepsilon_0>0$ such that if
$u:B_R(0)\setminus\{0\}\to N$ is a smooth harmonic map, satisfying
\begin{equation}\label{self-similar1}
|\nabla u(x)|\le \frac{\varepsilon_0}{|x|}, \ \ \forall x\in B_R(0)\setminus\{0\},  
\end{equation}
and if, in addition, there exists $r_i\to 0$ such that
\begin{equation}\label{equal-dist0}
\lim_{i\to\infty} r_i\int_{\partial B_{r_i}(0)} \big(|\frac{\partial u}{\partial r}|^2-\frac{1}{r^2}|\frac{\partial u}{\partial\theta}|^2\big)\,d\sigma=0,
\end{equation}
then $u\in C^\infty(B_R(0),N)$.
\end{theorem}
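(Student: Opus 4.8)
The plan is to combine the Pohozaev identity for harmonic maps with a conformal rescaling to the infinite cylinder, where the a priori uncontrolled, scale-invariant two-dimensional Dirichlet energy turns into a bounded-energy problem governed by the spectral gap of $\Sph^1$. First I would record the Pohozaev identity: testing $\Delta u+A(u)(\nabla u,\nabla u)=0$ against the radial field $x\cdot\nabla u$ on an annulus $B_r\setminus B_\rho$, and using that $A(u)(\nabla u,\nabla u)\perp T_uN$ while $x\cdot\nabla u\in T_uN$, the nonlinear term drops out; since the ambient dimension is two the bulk term of the resulting divergence identity also vanishes, leaving that $r\int_{\partial B_r}\big(|\partial_r u|^2-r^{-2}|\partial_\theta u|^2\big)\,d\sigma$ is constant on $(0,R)$. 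Hypothesis \eqref{equal-dist0} forces this constant to be $0$. (Conceptually: the Hopf differential $\langle u_z,u_z\rangle\,dz^2$ is holomorphic on $B_R\setminus\{0\}$ and by \eqref{self-similar1} has at worst a double pole at $0$; the Pohozaev constant is $8\pi\,\mathrm{Re}$ of the coefficient of that double pole, which \eqref{equal-dist0} annihilates.)

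Next, pass to the cylinder: with $z=e^{s+i\theta}$, set $v(s,\theta):=u(e^{s+i\theta})$ on $(-\infty,\log R)\times\Sph^1$. By conformal invariance of the harmonic map equation in dimension two, $v$ is a smooth harmonic map with $|\nabla v|^2=|z|^2|\nabla u|^2\le\varepsilon_0^2$. The Pohozaev quantity becomes $\int_{\Sph^1}\big(|v_s|^2-|v_\theta|^2\big)\,d\theta$, so we obtain the balance $\int_{\Sph^1}|v_s|^2\,d\theta\equiv\int_{\Sph^1}|v_\theta|^2\,d\theta$ for all $s$. Setting $f(s):=\tfrac12\int_{\Sph^1}|v_\theta(s,\cdot)|^2\,d\theta$, so that $0\le f\le\pi\varepsilon_0^2$, the balance gives that the Dirichlet energy of $u$ over $B_{e^{s_1}}\setminus\{0\}$ equals $\int_{-\infty}^{s_1}\!\int_{\Sph^1}\big(|v_s|^2+|v_\theta|^2\big)\,d\theta\,ds=4\int_{-\infty}^{s_1}f(s)\,ds$ for every $s_1<\log R$.

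The heart of the matter is a differential inequality for $f$. Differentiating twice, substituting $v_{ss}=-v_{\theta\theta}-A(v)(\nabla v,\nabla v)$ from the equation, integrating by parts on $\Sph^1$, and using that $A(v)(\nabla v,\nabla v)$ is normal to $N$ (so that only its pairing with the normal part $-A(v)(v_\theta,v_\theta)$ of $v_{\theta\theta}$ survives), one gets
\[
f''(s)=\int_{\Sph^1}|v_{\theta\theta}|^2\,d\theta+\int_{\Sph^1}|v_{s\theta}|^2\,d\theta+\int_{\Sph^1}\big\langle A(v)(\nabla v,\nabla v),\,v_{\theta\theta}\big\rangle\,d\theta .
\]
The last term is $\le C\|A\|_{L^\infty(N)}^2\varepsilon_0^2\int_{\Sph^1}|v_\theta|^2\,d\theta$ in absolute value, using $|v_s|,|v_\theta|\le\varepsilon_0$, while the Poincaré–Wirtinger inequality on $\Sph^1$ applied to the zero-mean field $v_\theta$ gives $\int_{\Sph^1}|v_{\theta\theta}|^2\,d\theta\ge\int_{\Sph^1}|v_\theta|^2\,d\theta$. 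Hence, if $\varepsilon_0$ is small depending only on $N$ (through $\|A\|_{L^\infty}$), then $f''\ge f$ on $(-\infty,\log R)$, with $f$ bounded.

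Finally, comparing $f$ on $[a,s_1]$ with the solution of $\bar\phi''=\bar\phi$ having equal (or larger) endpoint values and letting $a\to-\infty$ yields $f(s)\le f(s_1)e^{s-s_1}$ for all $s\le s_1$; therefore $\int_{B_{e^{s_1}}\setminus\{0\}}|\nabla u|^2=4\int_{-\infty}^{s_1}f\le 4f(s_1)<\infty$, i.e. $u$ has finite Dirichlet energy near $0$. A finite-energy harmonic map on a punctured two-disk extends smoothly across the puncture (Sacks--Uhlenbeck \cite{SacksUhlenbeck1981}; equivalently $u\in W^{1,2}(B_R,N)$ is weakly harmonic, hence smooth by H\'elein \cite{Helein1991}), so $u\in C^\infty(B_R,N)$. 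When $R=\infty$ the comparison is two-sided on $\R$ and forces $f\equiv0$; with the balance this gives $|\nabla v|\equiv0$, so $u$ is constant. I expect the derivation and sign of $f''\ge f$ to be the delicate point: the conformal rescaling is precisely what produces a cylinder problem in which the spectral gap $\lambda_1(\Sph^1)=1$ supplies coercivity, the smallness of $\varepsilon_0$ is used exactly to absorb the curvature term $\langle A(v)(\nabla v,\nabla v),v_{\theta\theta}\rangle$ coming from the nonlinearity, and the hypothesis \eqref{equal-dist0} enters through the balance, converting exponential decay of $\int_{\Sph^1}|v_\theta|^2$ into finiteness of the total energy.
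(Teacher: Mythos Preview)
Your argument is correct, and it shares the opening move with the paper: the Pohozaev/stationarity identity shows that $r\int_{\partial B_r}\big(|\partial_r u|^2-r^{-2}|\partial_\theta u|^2\big)\,d\sigma$ is constant, and \eqref{equal-dist0} forces it to vanish, giving the radial/tangential balance on every circle. From this point on the two proofs diverge. The paper stays in the punctured disk and follows the Sacks--Uhlenbeck template: it decomposes $B_{R_*}\setminus B_{r_*}$ into dyadic annuli $A_m$, subtracts on each $A_m$ a radial harmonic function $h_m=a_m+b_m\log r$ matching the circular averages, tests the equation against $u-h_m$, uses the balance to convert $\int_{A_m}|\nabla(u-h_m)|^2$ into $\tfrac12\int_{A_m}|\nabla u|^2$, absorbs the curvature term via the oscillation bound, and telescopes the boundary pieces to obtain a uniform energy bound; a second pass then yields the first-order differential inequality $\int_{B_r}|\nabla u|^2\le Cr\int_{\partial B_r}|\nabla u|^2$ and hence Morrey decay and $C^\alpha$ regularity directly. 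Your route is the conformal/cylindrical one: after passing to $v(s,\theta)=u(e^{s+i\theta})$ you derive the second-order inequality $f''\ge f$ for $f(s)=\tfrac12\int_{\Sph^1}|v_\theta|^2$ (the spectral gap $\lambda_1(\Sph^1)=1$ and the smallness of $\varepsilon_0$ playing exactly the same roles as in the paper), and then the maximum principle gives exponential decay $f(s)\le f(s_1)e^{s-s_1}$ and hence finite energy, at which point you invoke Sacks--Uhlenbeck/H\'elein as a black box. What each approach buys: the paper's argument is self-contained through H\"older continuity and makes the Morrey-type decay explicit; your argument is shorter, gives the sharper decay rate $e^{s}$ (i.e.\ $r^2$) immediately, and handles the Liouville statement for $R=\infty$ in the same breath via the two-sided comparison. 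Both proofs use the balance, Poincar\'e on $\Sph^1$, and the smallness of $\varepsilon_0$ in essentially the same places; the difference is really one of packaging (dyadic telescoping vs.\ an ODE on the cylinder).
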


As a direct consequence of Theorem \eqref{removability}, we establish
\begin{corollary} \label{rigidity}
There exists an $\varepsilon_0>0$ such that if $u\in C^\infty(\mathbb{R}^2\setminus\{0\}, N)$ is a harmonic map, satisfying
\begin{equation}\label{self-similar2}
|\nabla u(x)|\le \frac{\varepsilon_0}{|x|}, \ \ \forall x\in \mathbb{R}^2\setminus\{0\},  
\end{equation}
and if, in addition, there exists $r_i\to 0$ such that
\begin{equation}\label{equal-dist4}
\lim_{i\to\infty} r_i\int_{\partial B_{r_i}(0)} \big(|\frac{\partial u}{\partial r}|^2-\frac{1}{r^2}|\frac{\partial u}{\partial\theta}|^2\big)\,d\sigma=0,
\end{equation}
then $u$ must be a constant map.
\end{corollary}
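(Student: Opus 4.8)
The plan is to use Theorem~\ref{removability} to remove the singularity at the origin, a conformal inversion to remove it ``at infinity'' as well, and then the smallness of $\varepsilon_0$ to force the total Dirichlet energy below the energy-gap threshold for harmonic maps from $\mathbb S^2$, whence triviality. First, applying Theorem~\ref{removability} on each finite ball $B_R(0)\subset\R^2$ --- where \eqref{self-similar2} and \eqref{equal-dist4} are exactly \eqref{self-similar1} and \eqref{equal-dist0} --- shows $u\in C^\infty(B_R(0),N)$ for every $R<\infty$, hence $u\in C^\infty(\R^2,N)$.

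Next I would exploit the conformal invariance of the harmonic map equation in dimension two. The Kelvin transform $v(x):=u(x/|x|^2)$ is a smooth harmonic map on $\R^2\setminus\{0\}$, and conformality of the inversion gives $|\nabla v(x)|=|x|^{-2}|\nabla u(x/|x|^2)|\le\varepsilon_0/|x|$. Since $u$ is now smooth on all of $\R^2$, the Pohozaev identity on $B_r$ --- test the equation $\Delta u+A(u)(\nabla u,\nabla u)=0$ against $x\cdot\nabla u$ and use $A(u)(\nabla u,\nabla u)\perp T_uN$ --- gives $\int_{\partial B_r}|\partial_r u|^2\,d\sigma=\int_{\partial B_r}r^{-2}|\partial_\theta u|^2\,d\sigma$ for all $r>0$, and a change of variables then shows that the quantity in \eqref{equal-dist4} written for $v$ at radius $s$ equals the one for $u$ at radius $1/s$, hence it vanishes identically. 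Thus $v$ also satisfies the hypotheses of Theorem~\ref{removability} at $0$ (with any $r_i\to0$), so $v\in C^\infty(\R^2,N)$; equivalently, $u$ extends to a smooth harmonic map $\hat u\colon\mathbb S^2\to N$, so that $E(u,\R^2)=E(\hat u,\mathbb S^2)<\infty$.

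The core of the argument is the quantitative bound $E(u,\R^2)\le C\varepsilon_0^2$. Two-dimensional $\varepsilon$-regularity for harmonic maps, applied on the balls $B_{|x_0|/2}(x_0)\subset\{\,|x_0|/2\le|x|\le 3|x_0|/2\,\}$ (on which $\int|\nabla u|^2\le\pi\varepsilon_0^2<\varepsilon_1$ once $\varepsilon_0$ is small), self-improves \eqref{self-similar2} to $|\nabla u(x)|\le C_0\varepsilon_0/|x|$ with $C_0$ universal; then, using the Pohozaev identity from the previous step (integration by parts, the Poincar\'e inequality on circles, and absorption of the curvature term $\int_{B_r}(u-\bar u_r)\cdot A(u)(\nabla u,\nabla u)$, where $\bar u_r$ is the average of $u$ over $\partial B_r$), one obtains a differential inequality of the form $E(u,B_r)\le 2r\frac{d}{dr}E(u,B_r)$, which together with $\frac{d}{dr}E(u,B_r)\le 2\pi C_0^2\varepsilon_0^2/r$ forces $E(u,B_r)\le C\varepsilon_0^2$ on the admissible range of radii; running the same estimate for $v$ and transporting it through the conformal invariance of the energy bounds $E(u,\R^2\setminus B_\rho)$, and hence $E(u,\R^2)\le C\varepsilon_0^2$. (Alternatively, such an energy bound is already contained in the proof of Theorem~\ref{removability}.)

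Finally, choosing $\varepsilon_0$ so that $C\varepsilon_0^2$ is below the $\varepsilon$-regularity threshold $\varepsilon_1$, one concludes: for any $x_0\in\R^2$ and any $\rho>0$, $\int_{B_{2\rho}(x_0)}|\nabla u|^2\le E(u,\R^2)\le C\varepsilon_0^2<\varepsilon_1$, so $\varepsilon$-regularity gives $\rho^2|\nabla u(x_0)|^2\le C_1\int_{B_{2\rho}(x_0)}|\nabla u|^2\le C_1C\varepsilon_0^2$, and letting $\rho\to\infty$ gives $\nabla u(x_0)=0$. Hence $u$ is constant. (Equivalently, $\hat u\colon\mathbb S^2\to N$ is a harmonic map with energy below the gap threshold of $N$, hence trivial.) I expect the real obstacle to be the energy bound of Step 3: \eqref{self-similar2} alone only gives $|\nabla u|^2\le\varepsilon_0^2/|x|^2$, which is \emph{not} integrable at the origin, so one must combine the genuine smoothness at $0$ (Theorem~\ref{removability}) with the smallness of $\varepsilon_0$ via a monotonicity argument, and treat $\infty$ separately through the Kelvin transform --- which is exactly why the transfer of the balance condition \eqref{equal-dist4} to $v$, and with it the Pohozaev identity, is needed.
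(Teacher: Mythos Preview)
Your argument is correct, but it is considerably more elaborate than the paper's. The paper's proof is essentially three lines, drawing on an estimate already obtained inside the proof of Theorem~\ref{removability}: since the domain is now all of $\R^2$, inequality \eqref{energy9} holds for every $R>0$, and combining it with the pointwise bound $|\nabla u|\le \varepsilon_0/R$ on $\partial B_R$ gives directly
\[
\int_{B_R(0)}|\nabla u|^2 \;\le\; CR\int_{\partial B_R(0)}|\nabla u|^2\,d\sigma \;\le\; C\varepsilon_0^2, \qquad \forall R>0.
\]
Then the $\varepsilon_0$-gradient estimate on $B_R$ and $R\to\infty$ finish, exactly as in your Step~4.

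In particular, the Kelvin transform and the extension to $\mathbb{S}^2$ are unnecessary: once \eqref{energy9} is known for all $R$, letting $R\to\infty$ already yields $E(u,\R^2)\le C\varepsilon_0^2$, with no need to control $E(u,\R^2\setminus B_\rho)$ separately via $v$. Your parenthetical remark that ``such an energy bound is already contained in the proof of Theorem~\ref{removability}'' is precisely the paper's entire argument. The conformal-inversion route you take is valid and conceptually pleasant---it makes explicit that $u$ compactifies to a harmonic $2$-sphere of small energy, so one could alternatively invoke the Sacks--Uhlenbeck energy gap---but it duplicates work that \eqref{energy9} together with \eqref{self-similar2} handles in one line.
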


\section{Proofs}

This section is devoted to the proofs of Theorem \ref{removability} and Corollary \ref{rigidity}. First, we need

\begin{lemma} \label{equal-dist1} If $u\in C^\infty(B_R(0)\setminus\{0\}, N)$ is a harmonic map, then
\begin{equation}\label{equal-dist2}
\phi(r):=r\int_{\partial B_r(0)} \big(|\frac{\partial u}{\partial r}|^2
-\frac{1}{r^2}|\frac{\partial u}{\partial \theta}|^2\big)\,d\sigma
\end{equation}
is constant for $r\in (0, R)$.
\end{lemma}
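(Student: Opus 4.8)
The plan is to show that $\phi(r)$ is the flux of a conserved quantity coming from the stress–energy tensor of the harmonic map, so that $\phi'(r)=0$ on $(0,R)$. Recall that for a harmonic map $u$ the stress–energy tensor
\[
T_{\alpha\beta}=\ban{\partial_\alpha u,\partial_\beta u}-\tfrac12\delta_{\alpha\beta}|\nabla u|^2
\]
is divergence free, $\partial_\alpha T_{\alpha\beta}=0$, away from the singularity; this is the standard consequence of the harmonic map equation $\Delta u+A(u)(\nabla u,\nabla u)=0$ together with the fact that $A(u)(\nabla u,\nabla u)\perp T_uN$ while $\partial_\beta u\in T_uN$. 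In dimension two this is exactly the Hopf-differential/Pohozaev identity. The quantity $\phi(r)$ is precisely $\int_{\partial B_r}T_{rr}\,d\sigma\cdot r$ (in polar coordinates $T_{rr}=|\partial_r u|^2-\tfrac12|\nabla u|^2=\tfrac12(|\partial_r u|^2-\tfrac1{r^2}|\partial_\theta u|^2)$), up to the harmless constant factor $2$ coming from the normalization in \eqref{equal-dist2}.

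Concretely, first I would fix $0<r_1<r_2<R$ and integrate $\partial_\alpha(x_\beta T_{\alpha\beta})=T_{\alpha\alpha}=0$ — using that the trace $T_{\alpha\alpha}=2|\nabla u|^2-2\cdot\tfrac12|\nabla u|^2 \cdot\ldots$ vanishes is special to $n=2$; more robustly I would simply integrate $\operatorname{div}(\text{radial vector field}\cdot T)$ — over the annulus $A_{r_1,r_2}=B_{r_2}\setminus\overline{B_{r_1}}$. Applying the divergence theorem and using $\operatorname{div}T=0$ in the annulus (where $u$ is smooth), the interior term drops and one is left with the boundary integrals over $\partial B_{r_2}$ and $\partial B_{r_1}$ with opposite signs. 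A direct computation in polar coordinates identifies each boundary integral with $\phi(r_i)$ up to a fixed constant, giving $\phi(r_1)=\phi(r_2)$; since $r_1,r_2$ are arbitrary in $(0,R)$, $\phi$ is constant. Alternatively, and perhaps cleanest to write: compute $\phi'(r)$ directly by differentiating \eqref{equal-dist2}, expanding in polar coordinates, substituting the harmonic map equation written as $u_{rr}+\tfrac1r u_r+\tfrac1{r^2}u_{\theta\theta}=-A(u)(\nabla u,\nabla u)$, taking the inner product with $u_r$ (so the $A$-term disappears by orthogonality), and integrating by parts in $\theta$ over the closed curve $\partial B_r$ (no boundary terms); the surviving terms cancel and yield $\phi'(r)\equiv 0$.

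The only genuine point requiring care is the orthogonality step: one must pair the harmonic map equation with the \emph{tangential} vector $\partial_r u$ (respectively $\partial_\theta u$ when integrating by parts), so that $\ban{A(u)(\nabla u,\nabla u),\partial_r u}=0$ since $A(u)(\cdot,\cdot)$ is normal to $N$ at $u(x)$ and $\partial_r u(x)\in T_{u(x)}N$; everything else is the chain rule and integration by parts in $\theta$ on the circle, which produces no boundary contributions. Smoothness of $u$ on the closed annulus $\overline{A_{r_1,r_2}}\subset B_R(0)\setminus\{0\}$ justifies all the manipulations, so there is no regularity obstacle here — the bound \eqref{self-similar1} is not even needed for this lemma. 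I expect the write-up to be a half page of polar-coordinate bookkeeping with the orthogonality identity as the one conceptual input.
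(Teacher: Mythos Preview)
Your proposal is correct and is essentially the same argument as the paper's: the paper multiplies the harmonic map equation by $x\cdot\nabla u=r\,\partial_r u$, integrates over the annulus $B_{r_2}\setminus B_{r_1}$, uses the orthogonality $A(u)(\nabla u,\nabla u)\perp \partial_r u$ to drop the nonlinear term, and integrates by parts; the bulk terms cancel precisely in dimension two (equivalently, $T_{\alpha\alpha}=0$), leaving $\phi(r_1)=\phi(r_2)$. Your stress--energy/Pohozaev framing and the paper's direct computation are two presentations of the same identity.
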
 
\begin{proof} 
Since $u\in C^\infty(B_R(0)\setminus\{0\}, N)$ solves the harmonic map equation \eqref{hm0}, 
for any $0<r_1<r_2<R$, we can multiply (\ref{hm0}) by $x\cdot\nabla u$ and integrate the resulting equation
over $B_{r_2}(0)\setminus B_{r_1}(0)$ to obtain
\begin{align*}
0&=\int_{B_{r_2}(0)\setminus B_{r_1}(0)}\Delta u\cdot(x\cdot\nabla u)=\int_{B_{r_2}(0)\setminus B_{r_1}(0)}
(u_j x_iu_i)_j-|\nabla u|^2
-\frac12 x_j(|\nabla u|^2)_j\\
&=\int_{\partial(B_{r_2}(0)\setminus B_{r_1}(0))}
(x\cdot \nabla u)\cdot( \nu\cdot\nabla u)
-\frac12\int_{\partial(B_{r_2}(0)\setminus B_{r_1}(0))}|\nabla u|^2 x\cdot\nu,
\end{align*}
where $\nu$ denotes the outward unit normal of $\partial(B_{r_2}(0)\setminus B_{r_1}(0))$. This implies that
\begin{align*}
r_2\int_{\partial B_{r_2}(0)} \big(|\frac{\partial u}{\partial r}|^2
-\frac12|\nabla u|^2\big)\,d\sigma
=
r_1\int_{\partial B_{r_1}(0)} \big(|\frac{\partial u}{\partial r}|^2
-\frac12|\nabla u|^2\big)\,d\sigma.
\end{align*}
Since $\displaystyle|\nabla u|^2=|\frac{\partial u}{\partial r}|^2+\frac{1}{r^2}|\frac{\partial u}{\partial \theta}|^2$,
it follows that
\begin{align}
r_2\int_{\partial B_{r_2}(0)} \big(|\frac{\partial u}{\partial r}|^2
-\frac{1}{r^2}|\frac{\partial u}{\partial\theta}|^2\big)\,d\sigma
=
r_1\int_{\partial B_{r_1}(0)} \big(|\frac{\partial u}{\partial r}|^2
-\frac{1}{r^2}|\frac{\partial u}{\partial \theta}|^2\big)\,d\sigma.
\end{align}
This implies \eqref{equal-dist2}. \qed

\begin{remark} It is easy to check that if $\displaystyle d(x)=\frac{x}{|x|}:\R^2\setminus\{0\}\to \mathbb S^1$, then $\phi(r)=-2\pi$ for all $r>0$.
\end{remark}

\medskip
\noindent{\it Proof of Theorem \ref{removability}} : From the condition \eqref{equal-dist0}
and  \eqref{equal-dist2} of Lemma \ref{equal-dist1}, we have that 
\begin{equation}\label{equal-dist3}
    \int_{\partial B_r(0)} |\frac{\partial u}{\partial r}|^2\,d\sigma=\frac{1}{r^2}  \int_{\partial B_r(0)}|\frac{\partial u}{\partial\theta}|^2\,d\sigma
\end{equation}
holds for all $0<r<R$.

We will modify the original argument by Sacks-Uhlenbeck \cite{SacksUhlenbeck1981}
in showing that $x=0$ is a removable singularity for $u$. 

First, we will show that 
$u$ has finite Dirichlet energy, i.e., $u\in H^1(B_R(0))$.
For this, let $0<r_*<R_*\le R$ be two given radius.  Set $K=\big[\frac{\ln(\frac{R_*}{r_*})}{\ln 2}\big]\in \mathbb N$
and define the annuals 
$$\displaystyle A_m=B_{2^mr_*}(0)\setminus 
B_{2^{m-1}r_*}(0),  \  \ 1\le m\le K.$$
Denote the radial harmonic function
$h_m(r):=a_m+b_m\ln r: A_m\to \mathbb{R}^L$, where $a_m$ and $b_m\in\mathbb{R}^L$ are chosen
according to the condition
$$
h_m(2^{m}r_*)=
\fint_{\partial B_{2^{m}r_*}} u\,d\sigma,
\ \
h_m(2^{m-1}r_*)=
\fint_{\partial B_{2^{m-1}r_*}} u\,d\sigma,
$$
where $\displaystyle\fint_{\partial B_r(0)} f\,d\sigma=\frac{1}{2\pi r}\int_{\partial B_r(0)} f\,d\sigma$ 
denotes the average of $f$ over $\partial B_r(0)$. 

Note that the condition (\ref{self-similar1}) implies
$$
{\rm{osc}}_{A_m} u\le C\varepsilon_0, \ \forall 1\le m\le K.
$$
Now, by multiplying (\ref{hm0}) by
$u-h_m$ and integrating the resulting equation over $A_m$, we obtain 
\begin{align*}
&\int_{A_m}|\nabla (u-h_m)|^2
=\int_{\partial A_m} 
\big(\frac{\partial u}{\partial r}-h_m'(r)\big) \cdot(u-h_m)
+\int_{A_m} A(u)(\nabla u,\nabla u)\cdot(u-h_m)\\
&=\int_{\partial B_{2^{m}r_*}(0)} 
\frac{\partial u}{\partial r}\cdot(u-h_m)
-\int_{\partial B_{2^{m-1}r_*}(0)} 
\frac{\partial u}{\partial r}\cdot(u-h_m)\\
&\quad +\int_{A_m} A(u)(\nabla u,\nabla u)\cdot(u-h_m)\\
&\le \int_{\partial B_{2^{m}r_*}(0)}
\frac{\partial u}{\partial r} \cdot(u-h_m)
-\int_{\partial B_{2^{m-1}r_*}(0)}
\frac{\partial u}{\partial r}\cdot(u-h_m)+C\varepsilon_0
\int_{A_m}|\nabla u|^2.
\end{align*}
Since $h_m$ depends only on $r$, 
we can apply \eqref{equal-dist3} to obtain that
$$\int_{A_m}|\nabla (u-h_m)|^2
\ge \int_{A_m}\frac{1}{r^2}
|\frac{\partial u}{\partial \theta}|^2\,d\sigma=\frac12\int_{A_m}
|\nabla u|^2.$$
Hence we have
\begin{equation}\label{energy1}
(\frac12-C\varepsilon_0)
\int_{A_m}
|\nabla u|^2
\le \int_{\partial B_{2^{m}r_*}(0)}
\frac{\partial u}{\partial r} \cdot(u-h_m)
-\int_{\partial B_{2^{m-1}r_*}(0)}
\frac{\partial u}{\partial r}\cdot(u-h_m)
\end{equation}
By summing up \eqref{energy1} over
$1\le m\le K$, we obtain that
\begin{align}\label{energy2}
(\frac12-C\varepsilon_0)
\int_{B_{2^Kr_*}(0)\setminus B_{r_*}(0)}
|\nabla u|^2
&\le 
 \int_{\partial B_{2^Kr_*}(0)}
\frac{\partial u}{\partial r}\cdot(u-h_K)\nonumber\\
&\quad-\int_{\partial B_{r_*}(0)}
\frac{\partial u}{\partial r}\cdot(u-h_1).
\end{align}
By Poincar\`e inequality, \eqref{equal-dist3} and \eqref{self-similar1}, the terms in the right hand side of \eqref{energy2} can 
be estimated by
\begin{align}\label{energy4}
 \big|\int_{\partial B_{2^Kr_*}(0)}
\frac{\partial u}{\partial r}\cdot(u-h_K)\big|
&\le
C
\big(\int_{\partial B_{2^Kr_*}(0)}|\frac{\partial u}{\partial r}|^2\,d\sigma\big)^\frac12
\big(\int_{\partial B_{2^Kr_*}(0)}|u-h_K|^2\,d\sigma\big)^\frac12\nonumber\\
&\le
C 2^Kr_*
\Big(\int_{\partial B_{2^Kr_*}(0)}|\frac{\partial u}{\partial r}|^2\,d\sigma\Big)^\frac12
\Big(\int_{\partial B_{2^Kr_*}(0)}\frac{1}{r^2}|\frac{\partial u}{\partial\theta}|^2\,d\sigma\Big)^\frac12\nonumber\\
&\le C2^Kr_*\int_{\partial B_{2^Kr_*}(0)}|\nabla u|^2\,d\sigma\le C\varepsilon_0^2,
\end{align}
and, similarly,
\begin{align}\label{energy5}
 \big|\int_{\partial B_{r_*}(0)}
\frac{\partial u}{\partial r}\cdot(u-h_1)\big|
\le Cr_*
\int_{\partial B_{r_*}(0)}|\nabla u|^2\,d\sigma\le C\varepsilon_0^2.
\end{align}
Substituting the inequalities \eqref{energy4} and \eqref{energy5}
into (\ref{energy2}) yields 
\begin{equation}\label{energy3}
 (\frac12-C\varepsilon_0)
\int_{B_{2^Kr_*}(0)\setminus B_{r_*}(0)}
|\nabla u|^2\le C\varepsilon_0^2.
\end{equation}
Thus, by choosing $\varepsilon_0<\frac{1}{4C}$ and observing $\frac{R_*}2\le 2^Kr_*\le R_*$, we obtain that 
\begin{equation}\label{energy6}
\int_{B_{\frac{R_*}2}(0)\setminus B_{r_*}(0)}
|\nabla u|^2\le C\varepsilon_0^2.
\end{equation}
Since \eqref{energy6} holds for any two $0<r_*<R_*\le R$, we conclude that 
\begin{equation}\label{energy7}
\int_{B_{\frac{R}2}(0)}|\nabla u|^2\le C\varepsilon_0^2<\infty. 
\end{equation}

Next, with the help of \eqref{energy7}, we can repeat the above arguments to obtain the H\"older
continuity of $u$ near $x=0$. In fact,  after labeling $r=2^K r_*$ so that $r_*=2^{-K} r$,
\eqref{energy2}, \eqref{energy4} and \eqref{energy5} imply that for any $0<r<R$,
\begin{equation}\label{energy8}
\int_{B_r(0)\setminus B_{2^{-K}r}(0)} 
|\nabla u|^2
\le Cr\int_{\partial B_r(0)}|\nabla u|^2\,d\sigma+C2^{-K}r\int_{\partial B_{2^{-K}r}(0)}|\nabla u|^2\,d\sigma.    
\end{equation}
On the other hand, it follows from \eqref{energy7} that
$$
\lim_{K\to\infty} 2^{-K}r\int_{\partial B_{2^{-K}r}(0)}|\nabla u|^2\,d\sigma=0.
$$
Hence, after sending $K\to\infty$ in \eqref{energy8}, we obtain that for any $0<r<R$, 
\begin{equation}\label{energy9}
\int_{B_r(0)} |\nabla u|^2\le Cr\int_{\partial B_r(0)}|\nabla u|^2\,d\sigma.
\end{equation}
This implies that there exists an
$\alpha\in (0,1)$ such that 
$$
\int_{B_r(0)}|\nabla u|^2\le (\frac{r}{R})^{2\alpha}\int_{B_{\frac{R}2}(0)}|\nabla u|^2, \ \ 0<r\le \frac{R}2.
$$
Hence $u\in C^\alpha(B_\frac{R}2(0))$. By the higher order regularity of harmonic maps,
$u\in C^\infty(B_{\frac{R}2}(0))$ (see, for example, \cite{SacksUhlenbeck1981}).    
\end{proof}

\bigskip
\noindent{\it Proof of Corollary \ref{rigidity}} :  It follows from Theorem \ref{removability} and \eqref{energy9} that 
$u\in C^\infty(\mathbb{R}^2)$, and satisfies
\begin{equation}\label{energy10}
\int_{B_R(0)}
|\nabla u|^2
\le CR\int_{\partial B_R(0)}|\nabla u|^2\,d\sigma\le C\varepsilon_0^2,    \ \forall R>0.
\end{equation}
By choosing sufficiently small $\varepsilon_0$ in (\ref{energy10}) and applying the $\varepsilon_0$-gradient estimate for harmonic maps, we obtain that
$$\|\nabla u\|_{L^\infty(B_R(0))}\le \frac{C\varepsilon_0}{R}, \ \forall R>0.$$
This, after sending $R\to\infty$, yields $u$ must be constant.    
\qed

\bigskip
\bigskip
\noindent{\bf Acknowledgements}. The author is partially supported by NSF grant 2101224 and a Simons Travel Grant.

\bigskip


\begin{thebibliography}{99}
\bibitem{BangWang2025} J. Bang, C. Y. Wang, {\em On rigidity of the steady Ericksen-Leslie system}. arXiv:2502.05326.

\bibitem{Helein1991}  F. H\'elein; {\it Regularite des appliations faiblement harmoniques entre unne
surface et une variete riemannienne}, C. R. Aad. Si. Paris {\bf 312} (1991), 591-596.

\bibitem{SacksUhlenbeck1981} J. Sacks, K. Uhlenbeck, {\it The existence of minimal immersions of $2$-spheres}.
Annals Math.,  {\bf 113} (1981), 1-24.

\end{thebibliography}
\end{document}